\newtheorem{theorem}{Theorem}
\newtheorem{lemma}[theorem]{Lemma}
\theoremstyle{remark}
\newcommand{\V}{\mathcal{V}}
\newcommand{\X}{\mathcal{X}}
\newcommand{\Y}{\mathcal{Y}}
\newcommand{\sub}{\subseteq}
\def\td{tree-decom\-po\-si\-tion}
\title{A short derivation of the structure theorem for graphs with excluded topological minors}
\author{Joshua Erde and Daniel Wei{\ss}auer}
\date{}
\begin{document}
\maketitle

\begin{abstract}
As a major step in their proof of Wagner's conjecture, Robertson and Seymour showed that every graph not containing a fixed graph $H$ as a minor has a tree-decomposition in which each torso is almost embeddable in a surface of bounded genus. Recently, Grohe and Marx proved a similar result for graphs not containing $H$ as a topological minor. They showed that every graph which does not contain $H$ as a topological minor has a tree-decomposition in which every torso is either almost embeddable in a surface of bounded genus, or has a bounded number of vertices of high degree. We give a short proof of the theorem of Grohe and Marx, improving their bounds on a number of the parameters involved.
\end{abstract}

\begin{section}{Introduction} \label{sec: introduction}

	A graph~$H$ is a \emph{minor} of a graph~$G$ if~$H$ can be obtained from a subgraph of~$G$ by contracting edges. In a series of 23 papers, published between 1983 and 2012, Robertson and Seymour developed a deep theory of graph minors which culminated in the proof of \emph{Wagner's Conjecture}~\cite{GMXX}, which asserts that in any infinite set of finite graphs there is one which is a minor of another. One of the landmark results proved along the way, and indeed a fundamental step in resolving Wagner's Conjecture, is a structure theorem for graphs excluding a fixed graph as a minor~\cite{GMXVI}. It is easy to see that~$G$ cannot contain~$H$ as a minor if there is a surface into which~$G$ can be embedded but~$H$ cannot. Loosely speaking, the structure theorem of Robertson and Seymour asserts an approximate converse to this, thereby revealing the deep connection between topological graph theory and the theory of graph minors:
	
	\begin{theorem}[\cite{GMXVI} (informal)] \label{t: RS}
		For any $n \in \mathbb{N}$, every graph excluding the complete graph~$K_n$ as a minor has a tree-decomposition in which every torso is almost embeddable into a surface into which $K_n$ is not embeddable.
	\end{theorem}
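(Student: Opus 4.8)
This is not a statement one reproves in passing: it is the structure theorem of Graph Minors~XVI~\cite{GMXVI}, and its proof draws on essentially the whole of the earlier Graph Minors series. The honest plan is therefore to cite it and use it as a black box, as we shall; but it is worth recalling the shape of the argument. The organising principle is the duality between \td{}s of bounded adhesion and \emph{tangles}: a \td{} of adhesion less than~$\theta$ is, up to the usual caveats, the same data as a way of distinguishing all tangles of order~$\theta$. Hence, to build a \td{} of~$G$ whose torsos are almost embeddable, it suffices to prove a \emph{local} statement — that every tangle~$\mathcal{T}$ of sufficiently large order in a $K_n$-minor-free graph controls a single almost-embeddable piece of~$G$ — and then to glue these pieces along their bounded-order boundaries, which is routine.

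For the local statement one proceeds through a chain of reductions. If no tangle of the prescribed order exists, then $G$ has small tree-width and the torsos of an optimal \td{} are small, hence trivially almost embeddable; so assume~$\mathcal{T}$ exists. By the Grid Minor Theorem, a tangle of large order controls a large wall~$W$. The crucial step — and, to my mind, the real obstacle — is the \emph{Flat Wall Theorem}: in a graph with no~$K_n$ minor, a sufficiently large wall contains, after deleting a bounded ``apex'' set~$Z$, a large \emph{flat} subwall~$W'$, meaning that there is a separation $(A,B)$ of bounded order with $W' \sub B$ such that $G[B]$ can be drawn in a disc with the branch vertices of~$W'$ in the correct cyclic order on the boundary, allowing a bounded number of vortices of bounded depth. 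This, together with the companion fact that a tangle which cannot be flattened is instead tied to an embedding into a surface of bounded genus (respecting the tangle, and again up to a bounded apex set and bounded vortices), is where all the genuinely hard work of the later Graph Minors papers sits: it rests on the theory of societies and on a delicate analysis of how a wall can carry a non-planar configuration, ultimately producing a~$K_n$ minor whenever the structure is too wild to control.

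Granting this, one finishes as follows. The flat (or bounded-genus) region near~$\mathcal{T}$ generates a tangle lying close to~$\mathcal{T}$ in the tangle lattice; one enlarges this region as far as possible, observes that whatever remains attaches to it through a separator of bounded order, and concludes that~$\mathcal{T}$ is tied to one almost-embeddable chunk whose apex set, number of vortices and host surface are all bounded in terms of~$n$. Ranging over a nested family of tangles — equivalently, building the decomposition greedily from bounded-order separations and stopping once no tangle survives — yields the required \td{}; tracking the constants through this process is bookkeeping rather than mathematics. The two places I would expect to spend all of the effort are the Flat Wall Theorem itself and the passage from the local, one-tangle picture to uniform bounds for the global \td{}, in particular ensuring the apex and vortex parameters do not blow up when adhesion sets are absorbed into torsos.
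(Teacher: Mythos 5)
You have correctly identified that Theorem~\ref{t: RS} is not proved in this paper but cited as a black box from~\cite{GMXVI}, which is exactly the paper's treatment of it. Your high-level sketch of the Graph Minors argument (tangle--\td{} duality, Grid Minor Theorem, Flat Wall Theorem, bounded-genus embedding up to apex vertices and vortices, gluing along bounded-order separations) is an accurate summary, though of course none of it is needed here since the result is invoked wholesale.
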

	
	A graph~$H$ is a \emph{topological minor} of a graph~$G$ if~$G$ contains a subdivision of~$H$ as a subgraph. It is easy to see that~$G$ then also contains~$H$ as a minor. The converse is not true, as there exist cubic graphs with arbitrarily large complete minors. For topological minors, we thus have an additional degree-based obstruction, which is fundamentally different from the topological obstruction of surface-embeddings for graph minors. Grohe and Marx~\cite{GM15} proved a result in a similar spirit to Theorem~\ref{t: RS} for graphs excluding a fixed graph as a topological minor:
	
	\begin{theorem}[\cite{GM15} (informal)] \label{t: GM}
		For any $n \in \mathbb{N}$, every graph excluding~$K_n$ as a topological minor has a tree-decomposition in which every torso either
		\begin{enumerate}[\rm (i)]
			\item has a bounded number of vertices of high degree, or
			\item is almost embeddable into a surface of bounded genus.
		\end{enumerate}
	\end{theorem}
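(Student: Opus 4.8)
The plan is to deduce the theorem from the Robertson--Seymour structure theorem (Theorem~\ref{t: RS}), used as a black box exactly once per torso, together with the classical result of Bollob\'as--Thomason and Koml\'os--Szemer\'edi that every graph of average degree at least $cn^2$ contains a subdivision of $K_n$. Fix $n$. Set a connectivity threshold $k=k(n)=\Theta(n^2)$ large enough that every $k$-connected graph is $\big(\binom{n}{2}+n\big)$-linked, a degree threshold $d=d(n)$ polynomial in $n$ --- call a vertex \emph{big} if its degree is at least $d$ --- and a minor threshold $m=m(n)$ large enough that Theorem~\ref{t: RS} applies to $K_m$-minor-free graphs with good control on genus, on the number of apices and on the number of vortices.

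The core is a decomposition lemma: every graph with no topological $K_n$ admits a \td\ of adhesion less than $k$ in which each torso is either $K_m$-minor-free or has fewer than $n$ big vertices. Granting this, we finish quickly: to each $K_m$-minor-free torso apply Theorem~\ref{t: RS} and graft the resulting \td\ into the global one --- legitimate because any \td\ of a graph contains each of its cliques, in particular the fill-in cliques, inside a single bag --- so that the torsos become, on the one hand, almost-embeddable (type~(ii)) and, on the other hand, the unchanged torsos with fewer than $n$ big vertices (type~(i)). Tracking the parameters through this step is routine and yields the improved bounds, the key point being that Theorem~\ref{t: RS} is not applied recursively.

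To prove the decomposition lemma, repeatedly refine any torso that is \emph{bad}, meaning it has a $K_m$ minor and at least $n$ big vertices. If a bad torso $H$ has a separation of order less than $k$ with both sides proper, split along it and recurse on the two torsos; a proper side is strictly smaller than $H$, so this terminates, and as all separators used have order less than $k$ the adhesion stays bounded. It remains to see that a bad torso always admits such a separation, i.e.\ that a $k$-connected graph with at least $n$ big vertices must contain a topological $K_n$. This is a routing argument: such a graph is $\big(\binom{n}{2}+n\big)$-linked by the choice of $k$; pick big vertices $v_1,\dots,v_n$, reserve at each $v_i$ some $n-1$ private neighbours, pairwise distinct and distinct from all $v_j$ (possible as $d$ is large), and link the resulting $n(n-1)$ terminals in the prescribed $\binom{n}{2}$ pairs by internally disjoint paths avoiding $v_1,\dots,v_n$; the union is a subdivision of $K_n$.

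The main obstacle is that ``no topological $K_n$'' is not automatically inherited by torsos: a fill-in clique on an adhesion set is a complete graph, and a subdivision living inside a torso may traverse fill-in cliques without admitting a vertex-disjoint realization in the ambient graph. Handling this is the real work: the refinement must be organised so that subdivisions found in a torso can be pulled back through the tree-decomposition by rerouting fill-in edges along the branches --- which is possible because the leaf torsos reached by the refinement are themselves highly connected, hence linked --- all while keeping the recursion shallow. This bookkeeping is the crux of the proof and the source of the quantitative gains over Grohe and Marx.
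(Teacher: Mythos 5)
Your approach is genuinely different from the paper's, and the difference is exactly where the gap sits. The paper does not perform a recursive refinement at all: it fixes a $k$-atomic (hence $k$-lean) \td\ up front, shows that $k$-blocks and $K_m$-models can never induce the same orientation (using the Robertson--Seymour linkage lemma \emph{in the ambient graph $G$}), deduces that this \td\ efficiently distinguishes them, and then colours and contracts. Crucially, it never needs to find a subdivision of $K_n$ inside a torso, and so the torso-inheritance problem for topological minors, which you correctly identify as ``the main obstacle'', simply never arises. The substitute for ``$n$ big vertices'' is the notion of a $k$-block, which is inseparable \emph{by definition}; high degree alone gives no such guarantee.

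The gap in your argument is precisely the final paragraph. You observe that a topological $K_n$ found in a leaf torso need not pull back to $G$, and you assert that ``the refinement must be organised so that subdivisions found in a torso can be pulled back through the tree-decomposition by rerouting fill-in edges along the branches \ldots while keeping the recursion shallow.'' This is not a proof; it is a restatement of what must be shown. Concretely: a fill-in clique $Q$ sits on an adhesion set, and behind the corresponding tree edge there need not be a rich structure letting you realise $|Q|(|Q|-1)/2$ disjoint paths hitting $Q$ exactly in its endpoints. There is also no reason the branch you created a few steps ago is ``highly connected, hence linked''; the separator you split along could have been far from minimal, and the recursion may have destroyed whatever linkage was present. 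Finally, ``keeping the recursion shallow'' is never secured by anything in your argument, and if the depth is unbounded the cumulative rerouting problem grows without control. The paper sidesteps all of this: Lemma~\ref{join separators to minors} shows that each adhesion set to a red side \emph{does} extend to a $Q$-based model, but this is extracted from the leanness/efficiency of the initial \td, not from the recursion; and the only pullback needed is for \emph{minors} (Lemma~\ref{torso minor-free}), where the $Q$-based models supply the needed paths. If you want your recursive scheme to work, you must actually supply the rerouting lemma, and I would expect you to be forced back to something like leanness of the refinement and $k$-blocks rather than ``$n$ vertices of high degree'' to make the pullback go through.
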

 

 More recently, Dvo\v{r}\'{a}k~\cite{D12} refined the embeddability condition of this theorem to reflect more closely the topology of embeddings of an arbitrary graph~$H$ which is to be excluded as a topological minor.
 
 The proof given in~\cite{GM15}, which uses Theorem~\ref{t: GM} as a block-box, is algorithmic and explicitly provides a construction of the desired \td, however as a result the proof is quite technical in parts. In this paper, we give a short proof of Theorem~\ref{t: GM} which also provides a good heuristic for the structure of graphs without a large complete topological minor, as well as improving the implicit bounds given in \cite{GM15} on many of the parameters in their theorem. Our proof is non-constructive, but we note that it can easily be adapted to give an algorithm to find either a subdivision of $K_r$ or an appropriate \td. However, the run time of this algorithm will be much slower than that of the algorithm given in~\cite{GM15}.

	One of the fundamental structures we consider are $k$-blocks. A \emph{$k$-block} in a graph~$G$ is a set~$B$ of at least~$k$ vertices which is inclusion-maximal with the property that for every separation $(U,W)$ of order~$<\! k$, we either have $B \sub U$ or $B \sub W$. The notion of a $k$-block, which was first studied by Mader~\cite{mader78,mader74}, has previously been considered in the study of graph decompositions~\cite{CDHH13CanonicalParts, CDHH13CanonicalAlg,CG14:isolatingblocks}. 
    
    It is clear that a subdivision of a clique on~$k+1$ vertices yields a $k$-block. The converse is not true for any $k \geq 4$, as there exist planar graphs with arbitrarily large blocks. The second author~\cite{weissauer17block} proved a structure theorem for graphs without a $k$-block:
	
	\begin{theorem}[\cite{weissauer17block}] \label{t: W}
	Let~$G$ be a graph and $k \geq 2$. If $G$ has no $(k+1)$-block then $G$ has a tree-decomposition in which every torso has at most $k$ vertices of degree at least $2k(k-1)$.
	\end{theorem}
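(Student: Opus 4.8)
The plan is to take a tree-decomposition of $G$ of adhesion at most $k$ that is as efficient as possible, to read off the structure of its torsos from that efficiency, and to turn an over-large torso into a $(k+1)$-block of $G$. Call a tree-decomposition \emph{reduced} if no bag is contained in a neighbouring bag; there are only finitely many reduced tree-decompositions of $G$ of adhesion at most $k$ up to isomorphism, so I would fix one, $(T,\mathcal V)$, minimising $\sum_{t\in V(T)}|V_t|^2$, and, if needed, minimising a secondary parameter (such as $|V(T)|$) in order to make $(T,\mathcal V)$ \emph{lean}, i.e.\ so that no adhesion set $S=V_t\cap V_{t'}$ is loosely attached to the part of $G$ that lies below $t'$. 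I then claim this $(T,\mathcal V)$ works. A torso on at most $2k(k-1)$ vertices has every vertex of degree below $2k(k-1)$, so assume some torso $H_t$ has a set $Z$ of at least $k+1$ vertices of degree at least $2k(k-1)$; in particular $|V_t|\ge 2k(k-1)+1$.

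The first step is that minimality makes internal separations of $H_t$ very unbalanced. Any separation $(A,B)$ of $H_t$ of order at most $k$ extends to a separation of $G$ of order at most $k$ meeting $V_t$ exactly in $A$ and $B$: each adhesion set $V_t\cap V_{t'}$, being a clique of $H_t$, lies wholly in $A$ or in $B$, so the branch of $T$ at $t'$ may be reattached on that side. Splitting $V_t$ along this separation changes $\sum|V_t|^2$ by $|A|^2+|B|^2-|V_t|^2$, so minimality gives $|A|^2+|B|^2\ge|V_t|^2$; since $|A|+|B|=|V_t|+|A\cap B|$ with $|A\cap B|\le k$ and $|V_t|$ is large, a short computation forces $\min(|A|,|B|)\le 2k$. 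Hence no vertex of degree at least $2k(k-1)$ lies strictly on the smaller side of such a separation, as that side would have more than $2k$ vertices. Letting $B^{*}$ be the intersection, over all separations of $H_t$ of order at most $k$, of the larger side (taken with its separator), we see that $B^{*}$ lies on one side of every such separation, and that $Z\subseteq B^{*}$, so $|B^{*}|\ge k+1$.

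The main step is to lift this: I claim $B^{*}$ lies on one side of every separation of $G$ of order at most $k$. Suppose not, and pick a separation $(U,W)$ of $G$ of order at most $k$ with $B^{*}$ split, with $|U\cap W|$, and then the number of adhesion sets at $t$ that it splits, as small as possible. If it splits no adhesion set at $t$ then it restricts to a separation of $H_t$ of order at most $k$ splitting $B^{*}$, contradicting the previous step. Otherwise some $S=V_t\cap V_{t'}$ is split; let $X$ be the union of the bags in the branch of $T$ at $t'$, so that $V_t\cap X=S$ and hence $B^{*}\subseteq V_t\subseteq\bigl(V(G)\setminus X\bigr)\cup S$. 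Replacing $(U,W)$ by $\bigl(U\setminus(X\setminus S),\,W\cup X\bigr)$, or by the symmetric separation absorbing $X$ into the other side, un-splits $S$ and splits no further adhesion set at $t$; using leanness of $S$ inside $X$, one of the two replacements has order at most $|U\cap W|$. Moreover it still splits $B^{*}$ unless every vertex of $B^{*}$ outside $U\cap W$ lies in $S$, in which case $B^{*}\subseteq S\cup\bigl((U\cap W)\setminus X\bigr)$, and leanness (applied to the separation of $X$ that $(U,W)$ induces on $S$) bounds $|(U\cap W)\setminus X|$ by $k-|S|$, giving $|B^{*}|\le k$, a contradiction. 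Either way we contradict the minimal choice of $(U,W)$. Thus $B^{*}$ is not split by any separation of $G$ of order at most $k$; being of size at least $k+1$, it is contained in an inclusion-maximal such set, i.e.\ in a $(k+1)$-block of $G$ — contrary to hypothesis. Hence every torso of $(T,\mathcal V)$ has at most $k$ vertices of degree at least $2k(k-1)$.

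The main obstacle is the last step, and inside it the leanness of the adhesion sets: one has to know that an optimal tree-decomposition cannot have an adhesion set that is loosely attached to its branch, and then feed this through the uncrossing to control both the order of the new separation and the degenerate case in which all of $B^{*}$ happens to be hidden inside one split adhesion set. Making leanness precise and available — either by invoking a lean tree-decomposition of the required adhesion or by deriving leanness from the minimality already imposed — together with pinning down the constants in the first step, is where essentially all of the (still short) work lies.
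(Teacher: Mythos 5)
Your high-level plan — take a tree-decomposition of adhesion $\le k$ that is extremal for a potential function, deduce from extremality that low-order separations of a torso are very unbalanced, conclude that the high-degree vertices of the torso all live on a canonical ``big side'' $B^*$, and then lift this to show $B^*$ sits inside a $(k{+}1)$-block of $G$ — is essentially the route the paper takes through \cite{weissauer17block}. There the extremal object is a $k$-atomic \td\ (lexicographic fatness), which is shown via the Bellenbaum--Diestel argument (Lemma~\ref{lean lemma}) to be $k$-lean, and Lemma~\ref{structure thm blocks} is the working form of the statement. Your $\sum_t |V_t|^2$ potential is a cosmetic variant; the ``unbalanced separations'' computation is the expected one. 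So the architecture is right. However, there are concrete gaps in the lifting step, which is the heart of the proof, and you explicitly flag it as the place where ``all of the work lies'' without closing it.

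First, the notion of ``lean'' you invoke is not the one you actually need, and it is not established. What you describe (``no adhesion set $S$ is loosely attached to the part of $G$ below $t'$'') is closer to what the paper calls \emph{tightness}, which only guarantees a single $S$-path in the branch between each pair of vertices of $S$. Your uncrossing needs the much stronger Menger-type statement that for the branch $X$ with $V_t\cap X=S$, every separation of $G[X]$ has order at least $\min(|U\cap S|,|W\cap S|)$; this is a genuine well-linkedness of $S$ inside $G[X]$, not a consequence of tightness, and it does not fall out of the $\sum|V_t|^2$ minimisation without a Bellenbaum--Diestel style improvement argument, which you do not carry out (adding ``minimise $|V(T)|$ as a secondary parameter'' does not give it). Second, the case analysis inside the uncrossing is off: writing the two candidate replacements, their orders are $|(U\cap W)\setminus X|+|U\cap S|$ and $|(U\cap W)\setminus X|+|W\cap S|$, so ``one of them has order $\le|U\cap W|$'' is exactly the well-linkedness above. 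But your degenerate-case argument (``$B^*\subseteq S\cup((U\cap W)\setminus X)$'') needs \emph{both} replacements to un-split $B^*$, whereas you only get to use the one of lower order. The usable replacement failing to split $B^*$ gives only $B^*\cap(U\setminus W)\subseteq S$, or symmetrically $B^*\cap(W\setminus U)\subseteq S$, which is strictly weaker than what your final contradiction ($|B^*|\le k$) relies on. Until the well-linkedness is proved and this case analysis is repaired to use only the available replacement, the proof does not close.
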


Now, since a subdivision of a complete graph gives rise to both a complete minor and a block, there are two obvious obstructions to the existence of a large topological minor, the absence of a large complete minor or the absence of a large block. The upshot of Theorem~\ref{t: GM} is that in a local sense these are the only obstructions, any graph without a large topological minor has a tree-decomposition into parts whose torsos either don't contain a large minor, or don't contain a large block. Furthermore, by Theorem~\ref{t: RS} and Theorem~\ref{t: W}, the converse should also be true: if we can decompose the graph into parts whose torsos either don't contain a large minor or don't contain a large block, then we can refine this tree-decomposition into one satisfying the requirements of Theorem~\ref{t: GM}.

The idea of our proof is as follows. Both large minors and large blocks point towards a `big side' of every separation of low order. A subdivision of a clique simultaneously gives rise to both a complete minor and a block and, what's more, the two are hard to separate in that they choose the same `big side' for every low-order separation. A qualitative converse to this is already implicit in previous work on graph minors and linkage problems: if a graph contains a large complete minor and a large block which cannot be separated from that minor, then the graph contains a subdivision of a complete graph. 

Therefore, if we assume our graph does not contain a subdivision of~$K_r$, then we can separate any large minor from every large block. It then follows from the \emph{tangle tree theorem} of Robertson and Seymour~\cite{GMX} ~-- or rather its extension to \emph{profiles}~\cite{profiles,ProfilesNew,CDHH13CanonicalAlg} ~-- that there exists a tree-decomposition which separates the blocks from the minors. Hence each part is either free of large minors or of large blocks. 

However, in order to apply Theorems~\ref{t: RS} and~\ref{t: W}, we need to have control over the \emph{torsos}, and not every tree-decomposition will provide that: it might be, for example, that separating some set of blocks created a large minor in one of the torsos. We therefore contract some parts of our tree-decomposition and use the minimality of the remaining separations to prove that this does not happen.

A second nice feature of our proof is that we avoid the difficulty of constructing such a tree-decomposition by choosing initially a tree-decomposition with certain connectivity properties, the proof of whose existence already exists in the literature, and then simply \emph{deducing} that this tree-decomposition has the required properties. 

We are going to prove the following:	
   
\begin{theorem} \label{main result}
	Let~$r$ be a positive integer and let~$G$ be a graph containing no subdivision of~$K_r$. Then~$G$ has a \td\ of adhesion~$<\! r^2$ such that every torso either
	\begin{enumerate}[\rm (i)]
		\item has fewer than~$r^2$ vertices of degree at least~$2r^4$, or
		\item has no $K_{2r^2}$-minor.
	\end{enumerate}
\end{theorem}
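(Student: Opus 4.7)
My plan is to combine a key ``minor meets block'' lemma with an off-the-shelf canonical tree-decomposition, and then invoke Theorem~\ref{t: W} for further refinement. As highlighted in the introduction, the rough dichotomy is that a $K_{2r^2}$-minor and an $r^2$-block both single out a ``big side'' of every separation of order $< r^2$; if they always choose the same big side, one can splice them together to obtain a $K_r$-subdivision, so under the hypothesis of the theorem they must always be separable.

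The first step is to establish this splicing formally. The key lemma is: if $G$ has an $r^2$-block $B$ together with a $K_{2r^2}$-minor whose branch sets cannot be separated from $B$ by any separation of $G$ of order $< r^2$, then $G$ contains a subdivision of $K_r$. The argument is essentially a linkage. Out of the $2r^2$ branch sets, select $r$, and, using the non-separability together with Menger's theorem, link each of the $r$ chosen branch sets to $B$ by many internally disjoint paths ending in $B$. Because $B$ is an $r^2$-block, any two subsets of $B$ of size $\ge r^2$ can be joined in $G$ by $r^2$ internally disjoint paths (otherwise a low-order separation would split $B$). Since only $\binom{r}{2} < r^2$ disjoint paths are needed to realise the edges of $K_r$, the routing can be performed inside~$B$, producing the required subdivision.

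With the key lemma in hand, I would next appeal to a canonical tree-decomposition theorem for profiles of order~$r^2$, extending the Robertson--Seymour tangle tree theorem~\cite{GMX,profiles,CDHH13CanonicalAlg}, to obtain a \td\ $(T, \V)$ of $G$ of adhesion $< r^2$ in which all such profiles are distinguished. Both $r^2$-blocks and $K_{2r^2}$-minors of $G$ induce profiles of order $r^2$, so each block and each minor ``lives'' in a uniquely determined part. By the key lemma, no part hosts simultaneously a block and a minor profile, so for every part $V_t$ either there is no $r^2$-block inside $V_t$, in which case I would refine using Theorem~\ref{t: W} with $k = r^2 - 1$ to produce torsos with fewer than $r^2$ vertices of degree at least $2(r^2-1)(r^2-2) \le 2r^4$ (case (i)), or there is no $K_{2r^2}$-minor inside $V_t$ (case (ii)).

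The main obstacle is that the preceding dichotomy concerns parts $V_t$, whereas the theorem demands a conclusion for the \emph{torsos}, which carry extra virtual edges along the adhesion sets. In principle these virtual edges could create spurious large blocks or $K_{2r^2}$-minors in a torso that are not witnessed by the subgraph of $G$ induced on $V_t$. To handle this, following the strategy advertised in the introduction, I would contract some adjacent parts of the initial tree-decomposition and exploit the minimality of the remaining adhesion sets (inherited from the canonical construction) to argue that the virtual clique on each adhesion set can be realised by genuinely disjoint paths in the contracted neighbourhood. Thus any hypothetical spurious large block or minor in a torso would lift to one living on the same part of $G$, contradicting the canonical separation. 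Once torso-control is established, gluing together the refinements produced by Theorem~\ref{t: W} on the case-(i) parts with the unrefined case-(ii) parts yields the required \td\ of $G$.
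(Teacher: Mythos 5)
Your high-level plan matches the one advertised in the paper's introduction, but your sketch of the key ``splicing'' lemma has a genuine gap, and the torso control is left too vague to evaluate.

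The most serious problem is your argument that a $k$-block $B$ together with an inseparable $K_{2k}$-minor forces a $K_r$-subdivision. You propose to link $r$ chosen branch sets to $B$ by disjoint paths and then ``route inside $B$'', citing the block property. But a $k$-block is just a vertex set that no separation of order $< k$ can split; it is \emph{not} a highly connected subgraph, and there is no reason for the required $\binom{r}{2}$ internally disjoint connecting paths to exist within $G[B]$ or to be routable through $B$. In fact $B$ could induce an independent set. What the block property gives you, via Menger, is disjoint paths in $G$ between large subsets of $B$, but these paths may use arbitrary vertices of $G$ and you then have no control over how they interact with the paths linking the branch sets to $B$. The paper instead builds a gadget graph $H$ by blowing up $r$ chosen block vertices into independent sets, verifies that the blown-up set and the minor still induce the same orientation of the low-order separations of $H$, and only then invokes the Robertson--Seymour lemma on $Z$-based models (Lemma~\ref{robertson seymour lemma}) to extract a linkage that can be reassembled into a $K_r$-subdivision. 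That lemma, or something of comparable strength, seems unavoidable here; a bare Menger argument does not suffice.

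On the decomposition side, you propose to take a canonical tree-decomposition distinguishing all profiles of order $r^2$ and then \emph{refine} the block-free parts using Theorem~\ref{t: W}. Two issues: first, Theorem~\ref{t: W} as stated gives no adhesion bound, so gluing its output into your decomposition could ruin the global adhesion bound of $r^2$; second, and more fundamentally, refining inside a part does not obviously preserve the ``no $K_{2r^2}$-minor'' conclusion for the neighbouring torsos, so the two cases may interfere. The paper sidesteps both difficulties by working with a single $k$-atomic (hence $k$-lean and tight) tree-decomposition throughout: the leanness directly yields efficient distinguishing of blocks from minors, and Lemma~\ref{structure thm blocks} (the internal version of Theorem~\ref{t: W} stated for $k$-atomic decompositions) controls the high-degree vertices of the torso of a block-free part \emph{without any further refinement}. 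Your final torso-control paragraph gestures in the right direction --- contract blue subtrees and use efficiency of the remaining separations to realise the virtual clique edges by disjoint paths --- but as written it is not a proof; the precise statement needed is that across every blue/red boundary edge the red side contains a $(V_s\cap V_t)$-based model (Lemma~\ref{join separators to minors}), and establishing that again requires Lemma~\ref{robertson seymour lemma} together with the efficiency of the separation, which you have not set up.
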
		
	
	Combining Theorems~\ref{t: RS} and~\ref{main result} then yields Theorem~\ref{t: GM}. 
    
Let us briefly compare the bounds we get to the result of Grohe and Marx~\cite[Theorem 4.1]{GM15}. It is implicit in their results that if~$G$ contains no subdivision of~$K_r$, then~$G$ has a tree-decomposition of adhesion $O(r^6)$ such that every torso either has~$O(r^6)$ vertices of degree~$\Omega(r^7)$, has no $K_{\Omega(r^6)}$ minor or has size at most~$O(r^6)$. In this way, Theorem~\ref{main result} gives an improvement on the bounds for each of the parameters. Recently Liu and Thomas~\cite{LT14} also proved an extension of the work of Dvo\v{r}\'{a}k~\cite{D12}, with the aim to more closely control the bound on the degrees of the vertices in~(i). Their results, however, only give this structure `relative' to some tangle.

	\end{section}

     \begin{section}{Notation and background material} \label{sec: preliminaries}
     
	All graphs considered here are finite and undirected and contain neither loops nor parallel edges. Our notation and terminology mostly follow that of~\cite{DiestelBook16noEE}.     
	
	Given a tree~$T$ and $s, t \in V(T)$, we write $sTt$ for the unique $s$-$t$-path in~$T$. A \emph{separation} of a graph~$G=(V,E)$ is a pair $(A, B)$ with $V = A \cup B$ such that there are no edges between $A \setminus B$ and $B \setminus A$. The \emph{order} of $(A, B)$ is the number of vertices in $A \cap B$. We call the separation $(A,B)$ \emph{tight} if for all $x, y \in A \cap B$, both~$G[A]$ and~$G[B]$ contain an $x$-$y$-path with no internal vertices in $A \cap B$. 
	
	The set of all separations of~$G$ of order~$<\! k$ will be denoted by $S_k(G)$. An \emph{orientation} of~$S_k(G)$ is a subset of~$S_k(G)$ containing precisely one element from each pair $\{ (A,B), (B,A) \} \sub S_k(G)$. The orientation is \emph{consistent} if it does not contain two separations $(A, B), (C,D)$ with $B \sub C$ and $D \sub A$. A separation \emph{distinguishes} two orientations $O_1, O_2$ of~$S_k(G)$ if precisely one of $O_1, O_2$ contains it. It does so \emph{efficiently} if it has minimum order among all separations distinguishing them.
		
		Recall that, given an integer~$k$, a set~$B$ of at least~$k$ vertices of~$G$ is a \emph{$k$-block} if it is inclusion-maximal with the property that for every separation $(U,W)$ of \mbox{order~$<\! k$}, either $B \sub U$ or $B \sub W$. Observe that~$B$ induces a consistent orientation $O_B := \{ (U,W) \colon B \sub W \}$ of~$S_k(G)$.
		
		Given an integer~$m$, a \emph{model of~$K_m$} is a family~$\X$ of~$m$ pairwise disjoint sets of vertices of~$G$ such that $G[X]$ is connected for every $X \in \X$ and~$G$ has an edge between~$X$ and~$Y$ for any two $X, Y \in \X$. The elements of~$\X$ are called \emph{branch sets}. Note that, if $(U,W)$ is a separation of order~$<\! m$, then exactly one of $U \setminus W$ and $W \setminus U$ contains some branch set. In this way, $\X$ induces a consistent orientation~$O_{\X}$ of~$S_k(G)$, where $(U,W) \in O_{\X}$ if and only if some branch set of~$\X$ is contained in~$W$.
			
			A \emph{\td\ of~$G$} is a pair $(T, \V)$, where~$T$ is a tree and $\V = (V_t)_{t \in T}$ is a family of sets of vertices of~$G$ such that:
\begin{itemize}
\item for every $v \in V(G)$, the set of $t \in V(T)$ with $v \in V_t$ induces a non-empty subtree of~$T$;
\item for every edge $vw \in E(G)$ there is a $t \in V(T)$ with $v, w \in V_t$.
\end{itemize}
If $(T, \V)$ is a \td\ of~$G$, then every $st \in E(T)$ induces a separation
\[
(U_{s}, W_{t}) := ( \bigcup_{t \notin uTs} V_u, \bigcup_{s \not\in vTt} V_v).
\] 
Note that $U_s \cap W_t = V_s \cap V_t$. In this way, every edge $e \in E(T)$ has an \emph{order} given by the order of the separation it induces, which we will write as $|e|$. Similarly, an edge of~$T$ \emph{(efficiently) distinguishes} two orientations if the separation it induces does. We say that $(T, \V)$ \emph{(efficiently) distinguishes} two orientations~$O$ and~$P$ if some edge of~$T$ does. We call $(T, \V)$ \emph{tight} if every separation induced by an edge of~$T$ is tight.
			
		The \emph{adhesion} of $(T, \V)$ is the maximum order of an edge. If the adhesion of $(T, \V)$ is less than~$k$ and~$O$ is an orientation of $S_k(G)$, then~$O$ induces an orientation of the edges of~$T$ by orienting an edge~$st$ towards~$t$ if $(U_s, W_t) \in O$. If~$O$ is consistent, then all edges will be directed towards some node $t \in V(T)$, which we denote by~$t_O$ and call the \emph{home node} of~$O$. When~$O$ is induced by a block~$B$ or model~$\X$, we abbreviate $t_B := t_{O_B}$ and $t_{\X} := t_{O_{\X}}$, respectively. Observe that and edge $e \in E(T)$ distinguishes two orientations~$O$ and~$P$ if and only if $e \in E(t_OTt_P)$.
		
	Given $t \in V(T)$, the \emph{torso at~$t$} is the graph obtained from $G[V_t]$ by adding, for every neighbor~$s$ of~$t$, an edge between any two non-adjacent vertices in~$ V_s \cap V_t$. More generally, given a subtree $S \sub T$, the \emph{torso at~$S$} is the graph obtained from $G\left[\bigcup_{s \in S} V_s\right]$ by adding, for every edge $st \in E(T)$ with $S \cap \{ s, t \} = \{ s \}$, an edge between any two non-adjacent vertices in $V_s \cap V_t$.
				 
	We also define contractions on tree-decompositions: Given $(T,\V)$ and an edge $st \in E(T)$, to contract the edge~$st$ we form a \td\ $(T',\V')$ where
\begin{itemize}
\item $T'$ is obtained by contracting~$st$ in~$T$ to a new vertex~$x$;
\item Let $V_x' := V_s \cup V_t$ and $V'_u := V_u$ for all $u \in V(T) \setminus \{ s, t \}$.
\end{itemize}
It is simple to check that $(T', \V')$ is a \td. We note that the separations induced by an edge in $E(T) \setminus \{ st \}$ remain the same, as do the torsos of parts $V_u$ for $u \neq s,t$.
	
We say a \td\ $(T, \V)$ is \emph{$k$-lean} if it has adhesion~$<\! k$ and the following holds for all $p \in [k]$ and $s, t \in T$: If $sTt$ contains no edge of order~$<\! p$, then every separation $(A,B)$ with $|A \cap V_s| \geq p$ and $|B \cap V_t| \geq p$ has order at least~$p$.
	 
Let $n := |G|$. The \emph{fatness} of $(T, \V)$ is the sequence $(a_0, \ldots, a_n)$, where~$a_i$ denotes the number of parts of order $n - i$. A \td\ of lexicographically minimum fatness among all \td s of adhesion smaller than~$k$ is called \emph{$k$-atomic}. These \td s play a pivotal role in our proof, but we actually only require two properties that follow from this definition. It was observed by Carmesin, Diestel, Hamann and Hundertmark~\cite{ForcingBlocks} that the short proof of Thomas' Theorem~\cite{thomas90} given by Bellenbaum and Diestel in~\cite{bellenbaumDiestel} also shows that $k$-atomic \td s are $k$-lean (see also \cite{GJ16}).
    
    \begin{lemma}[\cite{bellenbaumDiestel}]\label{lean lemma}
    Every $k$-atomic tree-decomposition is $k$-lean.
    \end{lemma}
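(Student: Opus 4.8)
The statement is Thomas's lean-tree-decomposition theorem~\cite{thomas90} specialised to \td s of adhesion $<\!k$, so the cleanest route is to invoke the short proof of Bellenbaum and Diestel~\cite{bellenbaumDiestel} together with the observation of~\cite{ForcingBlocks,GJ16} that that proof respects the adhesion bound; below I outline how that argument runs. I would argue by contradiction. Suppose $(T,\V)$ is $k$-atomic but not $k$-lean. A $k$-atomic \td\ has adhesion $<\!k$ by definition, so the only way leanness can fail is that there are $p\in[k]$, nodes $s,t\in V(T)$ and a separation $(A,B)$ of $G$ of order $<\!p$ with $|A\cap V_s|\ge p$ and $|B\cap V_t|\ge p$, while every edge of $sTt$ has order $\ge p$. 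Among all such configurations I would fix one for which the order of $(A,B)$ is smallest, and subject to that the distance from $s$ to $t$ in $T$ is smallest.

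From such a configuration the plan is to build a \td\ $(T',\V')$ of $G$ of adhesion $<\!k$ whose fatness is lexicographically smaller than that of $(T,\V)$, contradicting $k$-atomicity. The construction is the standard ``pushing'' surgery of Thomas and of Bellenbaum--Diestel: one rebuilds the two sides of the path $sTt$, the $s$-side using $A$ and the $t$-side using $B$ (roughly, intersecting the bags with $A$ resp.\ with $B$ and carrying the separator $A\cap B$ along the path), and re-glues the two halves along a new edge whose bag is $A\cap B$, of order $<\!p\le k$. The workhorse is submodularity of the order function of separations, $\mathrm{ord}(A\cap C,\,B\cup D)+\mathrm{ord}(A\cup C,\,B\cap D)\le\mathrm{ord}(A,B)+\mathrm{ord}(C,D)$, applied to $(A,B)$ against the separation of $T$ that splits off a given node from the path $sTt$: because every edge of $sTt$ has order $\ge p>\mathrm{ord}(A,B)$, this keeps every separator produced by the surgery of order $<\!k$ and, together with a short computation, prevents any bag from growing. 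A separate accounting, seeded by the observation that $|A\cap V_s|\ge p>\mathrm{ord}(A,B)$ forces $V_s\not\subseteq B$, then shows that in passing to $(T',\V')$ the multiset of part-orders is altered only by replacing some part by a strictly smaller one, so the fatness strictly decreases.

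The main obstacle is exactly this bookkeeping. Three things need care: (i) pinning down precisely which nodes of each rebuilt half to retain so that the two halves agree on $A\cap B$ and the set of nodes whose bag contains a given vertex of $(A\setminus B)\cup(B\setminus A)$ remains a non-empty subtree, so that $(T',\V')$ really is a \td\ of $G$; (ii) running the submodularity estimate uniformly over all nodes, including those on $sTt$, where one must control the ``crossing'' separations that appear; and (iii) checking that the net effect on the fatness sequence is a genuine lexicographic drop rather than a reshuffling among parts of equal order --- this is where the minimality of $\mathrm{ord}(A,B)$ (and of $d_T(s,t)$) is used. Carrying out (i)--(iii) is precisely the content of~\cite{bellenbaumDiestel}; the only thing to add for the present statement, as noted in~\cite{ForcingBlocks}, is the elementary remark that every separation created by the surgery has order at most that of one of the separations it was built from, so $(T',\V')$ still has adhesion $<\!k$ and is therefore an admissible competitor, contradicting the $k$-atomicity of $(T,\V)$.
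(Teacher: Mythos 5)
The paper gives no proof of this lemma; it simply cites~\cite{bellenbaumDiestel} with the observation from~\cite{ForcingBlocks} that the Bellenbaum--Diestel argument respects the adhesion bound, and your outline is a faithful sketch of exactly that argument (minimal counterexample, the Thomas-style pushing surgery, submodularity to control orders, strict lexicographic drop in fatness). So you take the same route as the paper, just with more of the cited proof spelled out.
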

    
	It is also not hard to see that $k$-atomic \td s are tight. In~\cite{weissauer17block}, the second author used $k$-atomic \td s to prove a structure theorem for graphs without a $k$-block. In fact, the proof given there yields the following:

    \begin{lemma}[\cite{weissauer17block}] \label{structure thm blocks}
  	Let~$G$ be a graph and~$k$ a positive integer. Let~$(T, \V)$ be a $k$-atomic \td\ of~$G$ and $t \in V(T)$ such that~$V_t$ contains no $k$-block of~$G$. Then the torso at~$t$ contains fewer than~$k$ vertices of degree at least~$2k^2$.
    \end{lemma}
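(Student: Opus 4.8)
The plan is to prove the contrapositive: assuming that the torso~$H$ at~$t$ has a set~$Z$ of exactly~$k$ vertices each of degree at least~$2k^2$, I will exhibit a $k$-block of~$G$ that is contained in~$V_t$. The argument has a soft part, showing it suffices that~$Z$ be \emph{unsplittable} --- i.e.\ that $Z\sub A$ or $Z\sub B$ for every separation $(A,B)\in S_k(G)$ --- and a core part, deriving this from the degree hypothesis.

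For the soft part: an unsplittable set of at least~$k$ vertices induces a consistent orientation $O_Z=\{(A,B)\in S_k(G): Z\sub B\}$ of~$S_k(G)$, and if we extend~$Z$ to an inclusion-maximal unsplittable set~$B^{*}$, then~$B^{*}$ has at least~$k$ vertices and hence is a $k$-block. From $Z\sub B^{*}$ we get $O_{B^{*}}\sub O_Z$, so $O_{B^{*}}=O_Z$ as both are orientations of~$S_k(G)$. Since~$(T,\V)$ is $k$-atomic it has adhesion~$<k$, so~$O_{B^{*}}$ has a home node; and as $Z\sub V_t$, every edge of~$T$ at~$t$ is oriented towards~$t$ by~$O_Z=O_{B^{*}}$, so that home node is~$t$. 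The standard fact that a $k$-block lies in the bag at its home node now yields $B^{*}\sub V_t$: if some $v\in B^{*}$ lay outside~$V_t$, the first edge~$e$ of the path in~$T$ from~$t$ to the (nonempty) subtree of bags containing~$v$ would induce a separation one of whose sides contains~$V_t$ --- and hence, since~$e$ points towards~$t$ and~$B^{*}$ is unsplittable, contains~$B^{*}$ --- while the other side contains~$v$; then~$v$ lies in the adhesion set of~$e$, contradicting $v\notin V_t$ (as~$t$ is an endpoint of~$e$). Thus~$V_t$ contains the $k$-block~$B^{*}$, contrary to hypothesis.

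For the core part I use that $k$-atomic tree-decompositions are $k$-lean (Lemma~\ref{lean lemma}) and tight. The key claim is: for every $(A,B)\in S_k(G)$ and every $z\in Z$ with $z\in A\setminus B$, we have $|A\cap V_t|\ge k$. All $G$-neighbours of~$z$ lie in~$A$, so $\{z\}\cup(N_G(z)\cap V_t)\sub A\cap V_t$; if~$z$ has at least $k-1$ neighbours in~$V_t$, this already proves the claim. Otherwise recall that~$H$ arises from $G[V_t]$ by adding a clique on each adhesion set $V_s\cap V_t$ (over the neighbours~$s$ of~$t$ in~$T$), so the $H$-neighbours of~$z$ are its $G$-neighbours in~$V_t$ together with $W_z:=\bigcup\{(V_s\cap V_t)\setminus\{z\}: z\in V_s\cap V_t\}$; since~$z$ has fewer than $k-1$ $G$-neighbours in~$V_t$, it follows that $|W_z|\ge 2k^2-(k-2)$. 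Call a neighbour~$s$ of~$t$ \emph{bad} if $z\in V_s\cap V_t$ and $(V_s\cap V_t)\setminus\{z\}$ meets $B\setminus A$. For a bad~$s$, tightness of the separation $(U_s,W_t)$ induced by~$st$ provides, for a witness $y\in(V_s\cap V_t)\cap(B\setminus A)$, a $z$-$y$ path in $G[U_s]$ with interior in $U_s\setminus V_t$; this path must cross $A\cap B$, necessarily within $U_s\setminus V_t$. As the sets $U_s\setminus V_t$ over the neighbours~$s$ of~$t$ are pairwise disjoint, there are fewer than~$k$ bad neighbours, whereas $(V_s\cap V_t)\setminus\{z\}\sub A$ for every non-bad neighbour~$s$ with $z\in V_s\cap V_t$. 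Each adhesion set has fewer than~$k$ vertices, so the bad branches contribute at most $(k-1)(k-2)$ members of~$W_z$; hence $A\cap V_t$ contains~$z$ together with at least $2k^2-(k-2)-(k-1)(k-2)=k^2+2k$ further vertices, giving $|A\cap V_t|\ge(k+1)^2\ge k$ and proving the claim.

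Granting the claim, suppose some $(A,B)\in S_k(G)$ separated two vertices of~$Z$, say $z\in A\setminus B$ and $z'\in B\setminus A$. The claim, applied to~$z$ and (with~$A$ and~$B$ interchanged) to~$z'$, gives $|A\cap V_t|\ge k$ and $|B\cap V_t|\ge k$; but $k$-leanness, applied with both nodes equal to~$t$ and with $p=k$ (the trivial path~$tTt$ contains no edge of order~$<k$), then forces $(A,B)$ to have order at least~$k$ --- a contradiction. So no separation of order~$<k$ separates two vertices of~$Z$, i.e.\ $Z$ is unsplittable; since also $Z\sub V_t$, the soft part completes the proof. I expect the main obstacle to be the key claim, specifically the case where~$z$'s torso-degree comes almost entirely from the added cliques rather than from genuine $G$-neighbours: it is exactly there that tightness, together with the pairwise disjointness of the branches of~$T$ hanging off~$t$ --- and not leanness by itself --- is needed, converting ``$z$ lies in many adhesion cliques'' into ``$z$ drags many vertices of~$V_t$ onto whichever side of~$(A,B)$ it is on''.
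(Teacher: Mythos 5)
Your proof is correct. Note that the paper itself does not prove this lemma but imports it from~\cite{weissauer17block}, so there is no in-paper argument to compare against; your write-up is a sound self-contained derivation along the same lines as that reference (high torso-degree vertices in a tight, $k$-lean \td\ form an unsplittable set, which extends to a $k$-block living in the bag at its home node). The two load-bearing steps both check out: the tightness-plus-disjointness argument correctly charges each bad neighbour $s$ of $t$ a private vertex of $A\cap B$ inside $U_s\setminus V_t$, giving at most $k-1$ bad neighbours and hence at least $2k^2-(k-2)-(k-1)(k-2)=k^2+2k$ vertices of $A\cap V_t$ besides $z$; and the application of $k$-leanness with $s=t$ and $p=k$ is exactly what the definition permits, since the trivial path $tTt$ has no edges.
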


     Let~$G$ be a graph and $Z \sub V(G)$. We denote by~$G^Z$ the graph obtained from~$G$ by making the vertices of~$Z$ pairwise adjacent. A \emph{$Z$-based model} is a model~$\X$ of~$K_{|Z|}$ such that $X \cap Z$ consists of a single vertex for every $X \in \X$.
     
     The following lemma of Robertson and Seymour~\cite{GMXIII} is crucial to our proof.
     
          \begin{lemma}[\cite{GMXIII}] \label{robertson seymour lemma}
     	     		Let~$G$ be a graph, $Z \sub V(G)$ and $p := |Z|$. Let $q \geq 2p-1$ and let~$\X$ be a model of~$K_q$ in~$G^Z$. If~$\X$ and~$Z$ induce the same orientation of $S_p(G^Z)$, then~$G$ has a $Z$-based model.
     \end{lemma}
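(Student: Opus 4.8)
The plan is to show that the clique minor $\X$ can be threaded through $G$ to produce a $Z$-based model, with the hypothesis $O_{\X}=O_Z$ serving precisely to rule out low-order separations of $G^Z$ obstructing this. Write $m$ for the number of branch sets of $\X$ meeting $Z$; since these are pairwise disjoint and $|Z|=p$ we have $m\le p$, and I would distinguish the cases $m=p$ and $m<p$.

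Suppose first that $m=p$. Then each of the $p$ branch sets meeting $Z$ meets it in exactly one vertex, so it is connected in $G^Z$ while meeting $Z$ only once; as every fictitious edge of $G^Z$ has both ends in $Z$, such a branch set uses no fictitious edge and is therefore connected in $G$ — write $X_z$ for the one with $X_z\cap Z=\{z\}$. The remaining $q-p\ge p-1$ branch sets are disjoint from $Z$; call these \emph{free}. Again because fictitious edges lie inside $Z$, a free branch set is joined by an edge of $G$ to every other branch set, and free branch sets are pairwise joined in $G$. Hence the only pairs $\{z,z'\}\subseteq Z$ for which $X_z$ and $X_{z'}$ need not be $G$-adjacent are those forming a graph $D$ on $Z$; take a vertex cover $C$ of $D$ with $|C|\le p-1$ (possible for any graph on $p$ vertices), assign to each $z\in C$ a private free branch set $F_z$ (possible since there are $\ge p-1\ge|C|$ of them), and set $Y_z:=X_z\cup F_z$ for $z\in C$ and $Y_z:=X_z$ otherwise. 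The $Y_z$ are disjoint, connected in $G$, satisfy $Y_z\cap Z=\{z\}$, and are pairwise $G$-adjacent (a pair outside $D$ via the $X$'s, a pair inside $D$ via the free branch set absorbed into whichever of $z,z'$ lies in $C$, which is $G$-joined to every branch set). This is a $Z$-based model; note that this case uses nothing beyond $q\ge 2p-1$.

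Now suppose $m<p$, so there are $q-m\ge p$ free branch sets. I would route $Z$ into them by Menger's theorem. Form the auxiliary graph from $G^Z$ by adding a vertex $\sigma$ joined to $Z$ and, for each free branch set $F_j$, a vertex $\mu_j$ joined to every vertex of $F_j$ and to a vertex $\tau$; give $\sigma,\tau$ infinite capacity, each $\mu_j$ capacity one, and each vertex of $G^Z$ capacity one. The maximum $\sigma$--$\tau$ flow is at least $p$: otherwise there is a vertex cut of size $<p$, and writing its $G^Z$-part as $S_0$ and its $\mu_j$-part as $\{\mu_j:j\in J\}$ (so $|S_0|+|J|<p$), the failure of augmenting paths says that in $G^Z-S_0$ no vertex of $\bigcup_{j\notin J}F_j$ is reachable from $Z\setminus S_0$. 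Letting $R$ be the set reachable from $Z\setminus S_0$ in $G^Z-S_0$ and $A:=S_0\cup R$, $B:=V(G^Z)\setminus R$, one gets a separation $(A,B)$ of $G^Z$ of order $|S_0|<p$ with $Z\subseteq A$ and, since $|J|<p\le q-m$, an entire free branch set $F_{j_0}\subseteq B$; then $(B,A)\in O_Z$ while $(A,B)\in O_{\X}$, contradicting $O_{\X}=O_Z$. So the flow is $\ge p$, yielding $p$ vertex-disjoint paths $P_z$ ($z\in Z$) in $G$, each starting at $z$, internally disjoint from $Z$ (hence using no fictitious edge), and ending in $p$ distinct free branch sets $F_{j(z)}$. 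After rerouting these paths so that each meets $\bigcup_j F_j$ only in its final vertex, put $Y_z:=V(P_z)\cup F_{j(z)}$: disjoint connected subgraphs of $G$ with $Y_z\cap Z=\{z\}$, pairwise $G$-adjacent because distinct free branch sets are joined by an edge of $G$ — a $Z$-based model again. (The branch sets meeting $Z$, including any that meet it in $\ge 2$ vertices, are simply not used; the $P_z$ may pass through them harmlessly.)

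The main obstacle is the rerouting step when $m<p$: a priori a path $P_z$ may run through the target branch set $F_{j(z')}$ of another path, and then $Y_z$ and $Y_{z'}$ overlap (or the $G$-edge needed between them has been consumed). Simple exchange arguments push $P_z$ off free branch sets that are nobody's target, but pushing it off \emph{other paths'} targets is where real work is needed; the clean way is to pass to a counterexample $(G,Z,\X)$ of minimum order and first reduce — in particular so that $G^Z$ has no separation of order $<p$ with $Z$ strictly on one side (if it did, $O_{\X}=O_Z$ would force every branch set onto the $Z$-side and one could restrict there), and so that the free branch sets are inclusion-minimal — after which the routed paths can be taken internally disjoint from the free branch sets and the assembly above goes through. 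One final point to track is the arithmetic: in both cases the bound $q\ge 2p-1$ is used tightly — in the first case to guarantee $p-1$ free branch sets for the vertex cover, in the second (at $m=p-1$) to guarantee the $p$ free branch sets needed as routing targets — which is also what pins down the constant.
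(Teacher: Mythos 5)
The paper does not actually prove this lemma --- it is imported verbatim from Robertson and Seymour's Graph Minors XIII --- so there is no in-paper argument to compare against and your attempt has to stand on its own. Much of it does. The case $m=p$ is complete and correct (and, as you observe, uses only $q\ge 2p-1$, not the orientation hypothesis). In the case $m<p$, converting a failed Menger routing into a separation $(A,B)$ of $G^Z$ of order $<p$ with $Z\subseteq A$ and a free branch set beyond it, contradicting $O_{\X}=O_Z$, is exactly the right use of the hypothesis. One small repair there: a free branch set $F_{j_0}$ with $j_0\notin J$ is only guaranteed to avoid $R$, not $S_0$, so to put an entire branch set into $B\setminus A$ (which is what the orientation argument needs) you should count that at most $|S_0|$ of the $q-m\ge p$ free branch sets meet $S_0$ and at most $|J|$ have their $\mu_j$ in the cut; since $|S_0|+|J|<p$, one free branch set is untouched by both.

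The genuine gap is the one you flag yourself: making the $Y_z$ disjoint when a path $P_z$ threads through another path's target branch set. This is the crux of the lemma --- everything before it is routine --- and the sketch offered for it does not close it. Passing to a minimal counterexample and ``restricting to the $Z$-side'' of a low-order separation $(A,B)$ with $Z\subseteq A$ is not a legitimate reduction as stated: the hypothesis gives that some branch set lies in $A\setminus B$ and none lies in $B\setminus A$, but other branch sets may straddle $A\cap B$, and $(X\cap A)_{X\in\X}$ need not be a model of $K_q$ in $G[A]^{Z}$, since the components of $X\cap A$ attach to $A\cap B$ rather than to the clique on $Z$. Likewise, inclusion-minimality of the free branch sets does not obviously let you take the routed paths internally disjoint from all of them: if every $Z$--$F_{j(z)}$ linkage is forced through some other $F_{j'}$, shrinking $F_{j'}$ does not remove that obstruction. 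So the proof is incomplete precisely at the step that makes the lemma nontrivial; some further device --- for instance an extremal choice of the linkage combined with a reassignment of target branch sets --- is still required to finish the case $m<p$.
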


     \end{section}
     
	\begin{section}{The proof} \label{sec: proof}
    
    Let us fix throughout this section a graph~$G$ with no subdivision of~$K_r$, let $k:= r(r-1)$, $m := 2k$, and let $(T, \V)$ be a $k$-atomic \td\ of~$G$.
	
	 First, we will show that $(T, \V)$ efficiently distinguishes every $k$-block from every model of~$K_m$ in~$G$. This allows us to split $T$ into two types of sub-trees, those containing a $k$-block and those containing a model of~$K_m$. Lemma~\ref{structure thm blocks} allows us to bound the number of high degree degree vertices in the torsos in the latter components. We will then show that if we choose these sub-trees in a sensible way then we can also bound the order of a complete minor contained in the torsos of the former. Hence, by contracting each of these sub-trees in $(T, \V)$ we will have our desired \td.
		
		To show that $(T, \V)$ distinguishes every $k$-block from every model of~$K_m$ in~$G$, we must first show that they are distinguishable, that is, no $k$-block and~ $K_m$ induce the same orientation. The following lemma, as well as its proof, is similar to Lemma~6.11 in~\cite{GM15}.
		
		   \begin{lemma} \label{block + minor = topclique}
     	Let~$B$ be a $k$-block and~$\X$ a model of~$K_m$ in~$G$. If~$B$ and~$\X$ induce the same orientation of~$S_{k}$, then~$G$ contains a subdivision of~$K_r$ with arbitrarily prescribed branch vertices in~$B$.
     \end{lemma}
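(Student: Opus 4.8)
The plan is to feed the model~$\X$ into Lemma~\ref{robertson seymour lemma} to produce a complete minor rooted near~$B$, and then to exploit the connectivity forced by the $k$-block~$B$ in order to upgrade this rooted minor to the required subdivision.

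So fix $b_1,\dots,b_r\in B$, the prescribed branch vertices, and choose $Z\sub B$ with $\{b_1,\dots,b_r\}\sub Z$ and $|Z|=k=r(r-1)$; this is possible since $|B|\ge k\ge r$. I would then verify that $Z$ and~$\X$ satisfy the hypotheses of Lemma~\ref{robertson seymour lemma} with $p:=|Z|=k$ and $q:=m=2k\ge 2p-1$. Since adding edges preserves models, $\X$ is also a model of~$K_m$ in~$G^Z$. As $Z$ is a clique of~$G^Z$ of size~$k$ while every separation in $S_k(G^Z)$ has order~$<k=|Z|$, the set~$Z$ lies wholly on one side of each such separation and so induces an orientation $O_Z$ of $S_k(G^Z)$. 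Moreover $S_k(G^Z)\sub S_k(G)$, and since $Z\sub B$ one checks that the orientation $O_B$ restricts to~$O_Z$ on $S_k(G^Z)$: for $(U,W)\in S_k(G^Z)$ exactly one of $B\sub U$ and $B\sub W$ holds, and as $Z\sub B$ cannot lie on both sides this means $B\sub W$ if and only if $Z\sub W$. By hypothesis $O_\X=O_B$ on $S_k(G)$, hence $O_\X$ restricts to~$O_Z$ on $S_k(G^Z)$; that is, $\X$ and~$Z$ induce the same orientation of $S_{|Z|}(G^Z)$. Lemma~\ref{robertson seymour lemma} then provides a $Z$-based model, a family $\Y=\{Y_z:z\in Z\}$ with $Y_z\cap Z=\{z\}$ for each~$z$, each $G[Y_z]$ connected, and an edge of~$G$ joining $Y_z$ to~$Y_{z'}$ for all distinct $z,z'$.

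It remains to build from~$\Y$ a subdivision of~$K_r$ with branch vertices $b_1,\dots,b_r$, and this is where the block is really used. Note that $\Y$ has $|Z|=r(r-1)=2\binom r2$ branch sets, each meeting~$B$, and that $r(r-1)$ is exactly the number of half-edges of~$K_r$ at its vertices. Using that $B$ is a $k$-block --- so no separation of order~$<r(r-1)$ splits~$B$ --- I would apply Menger's theorem, splitting each~$b_i$ into $r-1$ copies to account for multiplicities, to route in~$G$ a family of $r(r-1)$ pairwise internally disjoint paths: exactly $r-1$ of them starting at each~$b_i$ and meeting only in that common endpoint, with their other endpoints lying in the $r(r-1)$ branch sets of~$\Y$, one in each, and internally avoiding $\bigcup_{z}Y_z$. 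For each~$i$ I would then fix a bijection between these $r-1$ paths at~$b_i$ and the $r-1$ edges of~$K_r$ incident with~$i$; writing $Y_{ij}$ for the branch set into which the path at~$b_i$ assigned to the edge~$\{i,j\}$ runs, one has $\{Y_{ij}:i\ne j\}=\Y$ bijectively. Now for each edge $\{i,j\}$, concatenating the path from~$b_i$ into~$Y_{ij}$, a path inside~$Y_{ij}$, an edge of~$G$ from~$Y_{ij}$ to~$Y_{ji}$, a path inside~$Y_{ji}$, and the path from~$Y_{ji}$ back to~$b_j$ yields a path from~$b_i$ to~$b_j$. Since distinct routing paths are disjoint, distinct branch sets are disjoint, and the ordered pairs correspond bijectively to the branch sets, these $\binom r2$ paths are internally disjoint, and together they form the desired subdivision of~$K_r$.

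Verifying the hypotheses of Lemma~\ref{robertson seymour lemma} is routine, so the main obstacle is the last step: routing the $r(r-1)$ paths out of the prescribed branch vertices and threading them through the complete minor while keeping everything disjoint. This is exactly where the value $k=r(r-1)$ is forced --- large enough for the routing in~$B$ to be possible, yet small enough (i.e.\ $2k-1\le m$) for Lemma~\ref{robertson seymour lemma} still to apply --- and $|Z|=k=2\binom r2$ supplies precisely the number of branch sets needed. A little extra care is needed to keep the routing paths out of the branch sets except at their endpoints, and to handle the case --- unavoidable when $|B|$ is close to~$k$ --- in which some~$b_i$ already lies in~$Z$ or inside a branch set, where one simply takes the ``path'' from~$b_i$ into~$Y_{b_i}$ to be trivial.
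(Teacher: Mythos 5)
Your setup and the verification of the hypotheses of Lemma~\ref{robertson seymour lemma} for~$Z$ and~$\X$ in~$G^Z$ are correct, and this part tracks the paper's argument fairly closely in spirit. But the order of operations is reversed in a way that creates a genuine gap. The paper does not apply Lemma~\ref{robertson seymour lemma} in~$G$: it first builds an auxiliary graph~$H$ by blowing each of the~$r$ prescribed branch vertices~$b$ up into an independent set~$J_b$ of $r-1$ clones (each inheriting all of~$b$'s neighbours), and only then applies Lemma~\ref{robertson seymour lemma} in~$H$ with $Z := J := \bigcup_b J_b$. The point of this is precisely that the resulting $J$-based model in~$H$ already \emph{is} the subdivision: for $b\neq c$ the two branch sets $Y_{v^b_c}$ and $Y_{v^c_b}$ supply a $v^b_c$--$v^c_b$ path, un-splitting maps it to a $b$--$c$ path in~$G$, and disjointness is inherited from disjointness of the branch sets. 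No routing step is needed. (The non-trivial part of the paper's proof is the verification that a small separation of~$H$ with~$J$ on one side and a branch set of~$\X$ on the other would project to a small separation of~$G$ distinguishing~$B$ from~$\X$; your proof has a corresponding, easier, verification because you work in~$G$, but you pay for it later.)

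By contrast, you apply the lemma in~$G$, obtain a $Z$-based model with one branch set per $z\in Z$, and then must manufacture, for each~$b_i$, $r-1$ pairwise internally disjoint paths leaving~$b_i$, landing in $r-1$ distinct branch sets, internally avoiding all branch sets, and with the whole family of $r(r-1)$ paths pairwise internally disjoint. You attribute this to ``Menger's theorem plus the $k$-block property,'' but that is exactly the hard part and it is not justified. Three concrete problems: (a) Menger yields $k$ disjoint paths from a source set to a target set, not a system in which each path terminates in a \emph{different} branch set --- you need a linkage with a prescribed one-per-branch-set matching, which Menger does not provide; (b) the paths must internally avoid $\bigcup_z Y_z$, so the relevant Menger/linkage statement lives in a modified graph (delete or contract the branch-set interiors, split the~$b_i$), and it is not shown that the $k$-block property of~$B$ in~$G$ transfers to give the required lower bound on separator size in that modified graph; (c) even granting (a) and (b), the $k$-block property says only that~$B$ cannot be split by a separation of order $<k$, which is a statement about separations of~$G$, not a statement that~$\{b_1,\dots,b_r\}$ with multiplicities can be linked disjointly to a given family of sets each meeting~$B$ in one vertex. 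Each of these would need a real argument, and it is not clear the statements are even true in the generality needed. The paper's clone-then-root trick is designed precisely to sidestep this routing problem, and your proof would have to supply a substitute for it before it could be considered complete.
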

     
     \begin{proof}
		Suppose~$B$ and~$\X$ induce the same orientation and let $B_0$ be an arbitrary subset of $B$ of size~$r$. Let~$H$ be the graph obtained from~$G$ by replacing every $b \in B_0$ by an independent set~$J_b$ of order~$(r-1)$, where every vertex of~$J_b$ is adjacent to every neighbor of~$b$ in~$G$ and to every vertex of~$J_c$ if $b, c$ are adjacent. Let $J := \bigcup_b J_b$ and note that $|J| = k$. We regard~$G$ as a subgraph of~$H$ by identifying each $b\in B$ with one arbitrary vertex in~$J_b$. In this way we can regard~$\X$ as a model of~$K_m$ in~$H$.
      
        Assume for a contradiction that there was a separation $(U,W)$ of~$H$ such that $|U \cap W| < |J|$, $J \sub U$ and $X \sub W \setminus U$ for some $X \in \X$. We may assume without loss of generality that for every $b \in B_0$, either $J_b \sub U \cap W$ or $J_b \cap (U \cap W) = \emptyset$. Indeed, if there is a $z \in J_b \setminus (U \cap W)$, then $z \in U \setminus W$, and we can delete any $z' \in J_b \cap W$ from~$W$ and maintain a separation (because $N(z) = N(z')$) with the desired properties. In particular, for every $b \in B_0$ we find $b \in W$ if and only if $J_b \sub W$. Since $|U \cap W| < |J|$, it follows that there is at least one $b_0 \in B_0$ with $J_{b_0} \sub (U \setminus W)$. Let $(U', W') := (U \cap V(G), W \cap V(G))$ be the induced separation of~$G$. Then $X \sub W' \setminus U'$ and $b_0 \in U' \setminus W'$. Since $|U' \cap W'| \leq |U \cap W| < k$ and~$B$ is a $k$-block, we have $B \sub U'$. But then $(U',W')$ distinguishes~$B$ and~$\X$, which is a contradiction to our initial assumption.
        
        We can now apply Lemma~\ref{robertson seymour lemma} to~$H$ and find a $J$-based model $\Y = (Y_j)_{j \in J}$ in~$H$. For each $b\in B_0$, label the vertices of~$J_b$ as $(v^b_c)_{c \in B_0 \setminus \{ b \}}$. For $b \neq c$, $H$ has a path $P_{b,c}' \sub Y_{v^b_c} \cup Y_{v^c_b}$ and the paths obtained like this are pairwise disjoint, because the~$Y_j$ are, and $P'_{b,c} \cap J = \{ v^b_c, v^c_b\}$. For each such path $P_{b,c}'$, obtain $P_{b,c} \sub G$ by replacing~$v^b_c$ by~$b$ and~$v^c_b$ by~$c$. The collection of these paths $(P_{b,c})_{b,c \in B_0}$ gives a subdivision of~$K_r$ with branch vertices in~$B_0$.
	\end{proof}
    
    Now we can show that $(T,\mathcal{V})$ efficiently distinguishes every $k$-block from every model of~$K_m$ in~$G$.

		\begin{lemma} \label{treedec eff dist}
$(T,\mathcal{V})$ efficiently distinguishes all orientations of~$S_k(G)$ induced by $k$-blocks or models of~$K_m$.
\end{lemma}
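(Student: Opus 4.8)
The plan is to derive the statement from the leanness of $(T,\V)$ (Lemma~\ref{lean lemma}). Let $O_1$ and $O_2$ be two distinct consistent orientations of $S_k(G)$, each induced by a $k$-block or by a model of $K_m$. Let $p<k$ be the minimum order of a separation distinguishing $O_1$ and $O_2$, and fix such a separation $(S_1,S_2)\in S_k(G)$, labelled so that $(S_2,S_1)\in O_1$ and $(S_1,S_2)\in O_2$. Every edge on the path $t_{O_1}Tt_{O_2}$ distinguishes $O_1$ from $O_2$, hence has order at least $p$; I must produce one of order exactly $p$. So I will assume, for a contradiction, that every edge of $t_{O_1}Tt_{O_2}$ has order strictly greater than $p$ (this includes, vacuously, the case $t_{O_1}=t_{O_2}$), and then contradict the minimality of $p$ by applying leanness with the parameter $p+1\le k$.

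The crux is the claim that $|S_1\cap V_{t_{O_1}}|\ge p+1$, and symmetrically $|S_2\cap V_{t_{O_2}}|\ge p+1$; I will prove the first, the second being identical. The common theme is that a ``witness'' for $O_1$ must reach the bag at the home node. If $O_1=O_B$ for a $k$-block $B$, then $(S_2,S_1)\in O_B$ forces $B\sub S_1$, and $B\sub V_{t_{O_1}}$: by definition of $O_B$, the set $B$ is contained in the $t_{O_1}$-side of every separation induced by an edge at $t_{O_1}$, and $V_{t_{O_1}}$ is precisely the intersection of those sides. Hence $|S_1\cap V_{t_{O_1}}|\ge|B|\ge k\ge p+1$.

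If instead $O_1=O_{\X}$ for a model $\X$ of $K_m$, I first check that every branch set $X\in\X$ meets $V_{t_{\X}}$. If not, then $X$ lies in a single component $C$ of $T-t_{\X}$, and the edge $e=t_{\X}s$ with $s\in C$ has induced separation $(U_s,W_{t_{\X}})$ with $X\sub U_s$; by definition of $O_{\X}$ this gives $(W_{t_{\X}},U_s)\in O_{\X}$, hence $(U_s,W_{t_{\X}})\notin O_{\X}$, contradicting that $e$ points to the home node $t_{\X}$. Now $(S_1,S_2)\notin O_{\X}$ means no branch set of $\X$ lies inside $S_2$; at most $p$ of the $m$ pairwise disjoint branch sets meet the separator $S_1\cap S_2$, and each of the remaining $\ge m-p$ branch sets is connected and disjoint from $S_1\cap S_2$, hence lies entirely in $S_1\setminus S_2$ or entirely in $S_2\setminus S_1$ -- and the latter is impossible. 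So at least $m-p=2k-p$ disjoint branch sets lie in $S_1$, each of them meets $V_{t_{\X}}$, and therefore $|S_1\cap V_{t_{O_1}}|\ge 2k-p\ge p+1$ (using $p\le k-1$). This proves the claim.

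Finally I apply the leanness of $(T,\V)$ with parameter $p+1$ and the nodes $s=t_{O_1}$, $t=t_{O_2}$: by the contradiction hypothesis the path $t_{O_1}Tt_{O_2}$ has no edge of order $<p+1$, so every separation $(A,B)$ with $|A\cap V_{t_{O_1}}|\ge p+1$ and $|B\cap V_{t_{O_2}}|\ge p+1$ has order at least $p+1$. By the claim, $(S_1,S_2)$ is such a separation, so its order is at least $p+1$ -- contradicting that it equals $p$. Hence some edge of $t_{O_1}Tt_{O_2}$, and therefore some edge of $T$, efficiently distinguishes $O_1$ and $O_2$. I expect the only genuinely delicate point to be the model case: because a branch set may consist of a single vertex, the inequality $|S_1\cap V_{t_{O_1}}|\ge p+1$ really relies on the observation that every branch set of a $K_m$-model reaches the bag at the home node; the block case and the final leanness step are routine bookkeeping with orientations.
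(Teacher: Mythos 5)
Your proof is correct and takes essentially the same approach as the paper: show that a $k$-block or $K_m$-model leaves a large footprint in the bag at its home node (in the model case, via the observation that every branch set meets $V_{t_{\X}}$ and that disjointness of branch sets supplies many distinct vertices), then apply $k$-leanness along the path between the two home nodes. The paper packages the footprint property as a standalone notion (\emph{anchored} orientations, giving a uniform bound of $k$) whereas you work directly with the fixed minimum-order distinguishing separation and obtain the bound $p+1$; this is only a difference in bookkeeping, not in substance.
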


	\begin{proof}
	Let us call a consistent orientation~$O$ of~$S_k(G)$ \emph{anchored} if for every $(U,W) \in O$, there are at least~$k$ vertices in $W \cap V_{t_O}$.
		
	Note that every orientation $O = O_B$ induced by a $k$-block~$B$ is trivially anchored, since $B \sub V_{t_B}$. But the same is true for the orientation $O = O_{\X}$ induced by a model~$\X$ of~$K_m$. Indeed, let $(U, W) \in O_{\X}$. Then every set in~$\X$ meets~$V_{t_{\X}}$. At least~$k$ branch sets of~$\X$ are disjoint from $U \cap W$, say $X_1, \ldots, X_k$, and they all lie in $W \setminus U$. For $1 \leq i \leq k$, let $x_i \in X_i \cap V_{t_{\X}}$ and note that $R := \{ x_1, \ldots, x_k\} \sub W \cap V_{t_{\X}}$.
	
	We now show that $(T, \V)$ efficiently distinguishes all anchored orientations of~$S_k(G)$. Let $O_1, O_2$ be anchored orientations of~$S_k(G)$ and let their home nodes be $t_1$ and $t_2$ respectively. If $t_1 \neq t_{2}$, let~$p$ be the minimum order of an edge along $t_1Tt_{2}$, and put $p := k$ otherwise. Choose some $(U, W) \in O_2 \setminus O_1$ of minimum order. Since~$O_1$ and~$O_2$ are anchored, we have $|U \cap V_{t_1}| \geq k$ and $|W \cap V_{t_2}| \geq k$. As $(T , \V)$ is $k$-lean, it follows that $|U \cap W| \geq p$. Hence $t_1 \neq t_2$ and $(T, \V)$ efficiently distinguishes~$O_1$ and~$O_2$.
\end{proof}		

Let us call a node $t \in V(T)$ a \emph{block-node} if it is the home node of some $k$-block and \emph{model-node} if it is the home node of a model of~$K_m$. 

Let $F \sub E(T)$ be inclusion-minimal such that every $k$-block is efficiently distinguished from every model of~$K_m$ by some separation induced by an edge in~$F$. We now define a red/blue colouring $c: V(T) \rightarrow \{r,b\}$ by letting $c(t) = b$ if the component of $T-F$ containing~$t$ contains a block-node and letting $c(t)=r$ if it contains a model-node. Let us first show that this is in fact a colouring of $V(T)$.
    
    \begin{lemma} \label{colouring total}
    	Every node receives exactly one colour.
    \end{lemma}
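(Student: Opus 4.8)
The plan is to verify the two halves of the claim: that no component of $T-F$ contains both a block-node and a model-node (so every node gets at most one colour), and that every component contains at least one of the two (so every node gets at least one). Before this I would clear away the degenerate cases. If $G$ has no $k$-block then Lemma~\ref{structure thm blocks} already shows $(T,\V)$ witnesses Theorem~\ref{main result}(i) at every torso, since $k=r(r-1)\le r^2$ and $2k^2\le 2r^4$; and if $G$ has a $k$-block but no model of~$K_m$ then $F=\emptyset$, so $T-F=T$, whose one component carries a block-node but no model-node, giving $c\equiv b$. So we may assume $G$ has both a $k$-block and a model of~$K_m$; then $F\neq\emptyset$, so every component of $T-F$ is a proper subtree of~$T$ and meets at least one edge of~$F$.

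For the first half I would argue directly. Suppose a component~$C$ contained both a block-node~$t_B$ (home node of a $k$-block~$B$) and a model-node~$t_\X$ (home node of a model~$\X$ of~$K_m$). By Lemma~\ref{block + minor = topclique}, $O_B\neq O_\X$ since $G$ has no subdivision of~$K_r$, so the defining property of~$F$ supplies an edge $e\in F$ that efficiently distinguishes them; as the edges of~$T$ distinguishing two orientations are exactly those on the path between their home nodes, $e$ lies on $t_BTt_\X$. But $C$ is a subtree containing $t_B$ and~$t_\X$, hence $t_BTt_\X\subseteq C$ and so $e\notin F$ --- a contradiction. (In particular $t_B\neq t_\X$, so no node is simultaneously a block- and a model-node.)

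The second half is the crux, and it is here that the minimality of~$F$ is used. Suppose some component~$C$ of $T-F$ had neither a block-node nor a model-node. Let $e_1,\dots,e_\ell$ be the edges of~$F$ incident with~$C$ (there is at least one), write $e_i=u_iv_i$ with $u_i\in C$, and let~$T_i$ be the side of~$e_i$ not meeting~$C$. Fix~$i$. Minimality of~$F$ means $F\setminus\{e_i\}$ fails the distinguishing property, so there are a $k$-block~$B$ and a model~$\X$ of~$K_m$ for which $e_i$ is the \emph{only} edge of~$F$ that efficiently distinguishes $O_B$ from~$O_\X$; in particular $e_i$ separates $t_B$ from~$t_\X$. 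Now $C$ lies wholly on one side of~$e_i$, so exactly one of $t_B,t_\X$ lies on that same side, and being a home node of the wrong type it cannot lie in~$C$; hence the path from it to~$C$ enters~$C$ through an edge of~$F$ incident with~$C$ other than~$e_i$, say~$e_j$ with $j\neq i$, placing that home node in~$T_j$. Then $t_BTt_\X$ contains both $e_i$ and~$e_j$, so the separation induced by~$e_j$ also distinguishes $O_B$ from~$O_\X$; since $e_i$ does so \emph{efficiently}, $|e_i|\le|e_j|$, and equality would make $e_j$ a second efficient distinguisher in~$F$, contrary to the choice of~$B,\X$. Thus $|e_i|<|e_j|$. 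So for every $i\in\{1,\dots,\ell\}$ there is $j\in\{1,\dots,\ell\}$ with $|e_i|<|e_j|$, which is impossible for an~$i$ with $|e_i|$ maximal. This contradiction gives the second half, and the lemma follows.

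I expect the only genuine difficulty to lie in the second half: turning ``$e_i$ is irredundant in~$F$'' into the strict inequality $|e_i|<|e_j|$ between two boundary edges of~$C$. The care needed is in checking that the pair of home nodes furnished by minimality lands on opposite sides of~$e_i$, and that the one on $C$'s side --- which, by the hypothesis on~$C$, is not in~$C$ --- is routed back into~$C$ through a \emph{different} boundary edge. The rest is routine tree bookkeeping: components of $T-F$ are subtrees, any edge joining two distinct components lies in~$F$, and an edge of~$T$ distinguishes two orientations exactly when it lies on the path joining their home nodes.
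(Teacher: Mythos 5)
Your proof is correct and takes essentially the same approach as the paper: the first half is the same argument, and in the second half you use the same key idea of applying minimality of $F$ to a boundary edge of the offending component to obtain a pair $(B,\X)$ with a unique efficient $F$-distinguisher, then locating a second boundary edge on $t_BTt_\X$ of no smaller order to contradict that uniqueness (the paper takes the maximum-order boundary edge up front, whereas you run the argument for each $i$ and take the maximum at the end, but this is cosmetic). Your explicit treatment of the degenerate cases in which $G$ has no $k$-block or no $K_m$-model is a small genuine refinement, as the paper's argument tacitly assumes $F\neq\emptyset$.
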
 
    
    \begin{proof}
    	Suppose first that $t \in V(T)$ is such that the component of $T-F$ containing~$t$ contains both a block node and a model node. Then there is a $k$-block~$B$ and a $K_m$-minor~$\X$ such that $t_BTt$ and $t_{\X}Tt$ both contain no edges of~$F$. But then~$B$ and~$\X$ are not separated by the separations induced by~$F$, a contradiction.
    	
    	Suppose now that $t \in V(T)$ is such that the component $S$ of $T-F$ containing~$t$ contains neither a block nor a minor. Let $f_1, \ldots, f_n$ be the edges of~$T$ between~$S$ and $T \setminus S$, ordered such that $|f_1| \geq |f_i|$ for all $i \leq n$. By minimality of~$F$, there is a block-node~$t_B$ and a model-node~$t_{\X}$ such that~$f_1$ is the only edge of~$F$ that efficiently distinguishes~$B$ and~$\X$. Since $t_B, t_{\X} \notin S$, there is a $j \geq 2$ such that $f_j \in E(t_BTt_{\X})$, and so~$f_j$ distinguishes~$B$ and~$\X$ as well, and since $|f_1| \geq |f_j|$, it does so efficiently, contradicting our choice of~$B$ and~$\X$
    \end{proof}
    
	   \begin{lemma} \label{join separators to minors}
    Let $st \in E(T)$ and suppose~$s$ is blue and~$t$ is red. Then~$G[W_t]$ has a $(V_s \cap V_t)$-based model.
    \end{lemma}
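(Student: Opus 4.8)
The plan is to obtain the $(V_s\cap V_t)$-based model by applying Lemma~\ref{robertson seymour lemma} to the graph $G[W_t]$ with $Z:=V_s\cap V_t=U_s\cap W_t$. Writing $p:=|Z|$, we have $p=|st|<k$ and hence $m=2k\ge 2p-1$, so it suffices to produce a model of $K_m$ in $(G[W_t])^Z$ that induces the same orientation of $S_p((G[W_t])^Z)$ as $Z$ does.

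First I would fix a block and a model on the correct sides of $st$. Since $s$ is blue there is a $k$-block $B$ whose home node $t_B$ lies in the component $D_s$ of $T-F$ containing $s$, and since $t$ is red there is a model $\X$ of $K_m$ whose home node $t_{\X}$ lies in the component $D_t$ of $T-F$ containing $t$. The key elementary observation is that $D_s$ and $D_t$ are $F$-free subtrees of $T$ joined by $st$, so the $T$-path $t_BTt_{\X}$ runs inside $D_s$ to $s$, crosses $st$, and runs inside $D_t$ to $t_{\X}$ — in particular it meets $F$ only in the edge $st$. By the defining property of $F$ some edge of $F$ efficiently distinguishes $B$ from $\X$, and any edge of $T$ distinguishing $B$ from $\X$ lies on $t_BTt_{\X}$; hence $st$ is such an edge, so no separation of $G$ of order $<p$ distinguishes $B$ from $\X$. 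Moreover, as $t_B$ and $t_{\X}$ lie on opposite sides of $st$, orienting $st$ by $O_B$ and by $O_{\X}$ gives, respectively, $B\sub U_s$ and $(U_s,W_t)\in O_{\X}$; the latter forces at least $m-p$ branch sets of $\X$ to lie inside $W_t\setminus Z$.

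Next I would build the $K_m$-model $\Y$ in $(G[W_t])^Z$: keep every branch set of $\X$ lying in $W_t\setminus Z$, and replace each remaining branch set $X$ (which necessarily meets $Z$) by $X\cap W_t$. As $X$ is connected in $G$ while $U_s\setminus Z$ attaches to $W_t$ only through $Z$, every component of $G[X\cap W_t]$ meets $Z$, so $X\cap W_t$ is connected in $(G[W_t])^Z$; and since $Z$ is a clique there, any two branch sets of $\Y$ are adjacent. Then I would check the orientation condition by contradiction. If $\Y$ and $Z$ induced different orientations of $S_p((G[W_t])^Z)$, then, $Z$ being a clique of size $p$, there would be a separation $(C,D)$ of $(G[W_t])^Z$ of order $<p$ with $Z\sub D$ and with every branch set of $\Y$ disjoint from $C\cap D$ contained in $C\setminus D$. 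Each \emph{replaced} branch set contains a vertex of $Z\sub D$, hence meets $C\cap D$; a short count then shows at least $m-p+1\ge k+2$ of the \emph{unchanged} branch sets $X$ satisfy $X\sub C\setminus D\sub W_t\setminus Z$. Now put $C^{+}:=C$ and $D^{+}:=D\cup U_s$. Since $C\sub W_t$ we have $C\cap U_s\sub Z\sub D$, so $C^{+}\cap D^{+}=C\cap D$ and $C^{+}\setminus D^{+}=C\setminus D$, and it follows (using once more that $U_s$ attaches to $W_t$ only through $Z$, together with the fact that $(C,D)$ separates in $(G[W_t])^Z$) that $(C^{+},D^{+})$ is a separation of $G$ of order $<p$. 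But $B\sub U_s\sub D^{+}$ while $C^{+}\setminus D^{+}$ contains a branch set of $\X$, so $(C^{+},D^{+})$ distinguishes $B$ from $\X$, contradicting the second paragraph. Hence $\Y$ and $Z$ induce the same orientation, and Lemma~\ref{robertson seymour lemma} yields a $(V_s\cap V_t)$-based model of $G[W_t]$.

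I expect the main work to be two pieces of careful bookkeeping. The first is the tree-topological point that $st$ is the \emph{only} edge of $F$ on $t_BTt_{\X}$: this is exactly what upgrades ``$st$ distinguishes $B$ and $\X$'' to ``$st$ \emph{efficiently} distinguishes'' them, without which there is nothing to contradict. The second is the claim that the offending separation $(C,D)$ of $(G[W_t])^Z$ lifts to a separation of $G$ \emph{without increasing its order} — which succeeds precisely because $C\sub W_t$ makes $C\cap U_s\sub Z\sub D$, so adjoining $U_s$ to the $D$-side adds no new vertex to the adhesion set. A smaller but genuine point is that one really needs a $K_m$-model, not merely a $K_{m-p}$-model, in $(G[W_t])^Z$, so the clique on $Z$ must be exploited to rescue those branch sets of $\X$ straddling the separation $(U_s,W_t)$.
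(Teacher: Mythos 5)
Your proof is correct and follows essentially the same route as the paper's: pick a block-node and a model-node in the respective $F$-free components flanking $st$, observe that $st$ is the unique $F$-edge on the connecting path so that $(U_s,W_t)$ efficiently distinguishes $B$ and $\X$, restrict $\X$ to $W_t$ to get a $K_m$-model in $G[W_t]^Z$, and derive the orientation hypothesis of Lemma~\ref{robertson seymour lemma} by lifting any putative bad separation of $G[W_t]^Z$ to a separation of $G$ of the same order that would contradict efficiency. You supply a bit more explicit bookkeeping than the paper (connectivity of $X\cap W_t$ via the clique on $Z$; the counting remark, though the line ``hence meets $C\cap D$'' should read ``hence cannot lie in $C\setminus D$''), while the paper instead argues directly that the offending branch set $X$ is disjoint from $U_s$ using connectivity of $X$, $X\cap Q=\emptyset$, and $X$ meeting $W_t$; these are equivalent observations and the core argument is identical.
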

    
        \begin{proof}
  Let $Q := V_s \cap V_t$. Let~$t_B$ be a block-node in the same component of $T - F$ as~$s$ and let~$t_{\X}$ be a model-node in the same component as~$t$. Since the separations induced by~$F$ efficiently distinguish~$B$ and~$\X$, it must be that $st \in F$ and $(U_s, W_t)$ efficiently distinguishes~$B$ and~$\X$.
    
  Let $\Y := (X \cap W_t)_{X \in \X}$. Since $(U_s, W_t) \in O_{\X}$, $\Y$ is a model of~$K_m$ in $G[W_t]^Q$. We wish to apply Lemma~\ref{robertson seymour lemma} to~$Q$ and~$\Y$ in the graph~$G[W_t]$.
    Suppose $Q$ and $\Y$ do not induce the same orientation of $S_{|Q|}(G[W_t]^Q)$. That is, there is a separation $(U,W)$ of~$G[W_t]^Q$ with $|U \cap W| < |Q|$ and $Q \sub U$ such that $Y\cap U = \emptyset$ for some $Y \in \Y$. There is an $X \in \X$ so that $Y = X \cap G[W_t]$. Note that $X \cap U$ is empty as well.
    Now $(U', W') := (U \cup U_s, W)$ is a separation of~$G$. Note that
    \[
	X \cap U' = X \cap U_s = \emptyset ,
    \]
    because~$X$ is connected, meets~$W_t$ and does not meet~$Q$. Therefore $X \sub W' \setminus U'$ and $B \sub U_s \sub U'$. But $|U' \cap W'| = |U \cap W| < |Q|$, which contradicts the fact that $(U_s, W_t)$ efficiently distinguishes~$B$ and~$\X$. Therefore, by Lemma~\ref{robertson seymour lemma}, $G[W_t]$ has a $Q$-based model.
    \end{proof}

Using the above we can bound the size of a complete minor in the torso of a blue component. The next lemma plays a similar role to Lemma~6.9 in~\cite{GM15}.
	    \begin{lemma} \label{torso minor-free}
    Let $S \sub T$ be a maximal subtree consisting of blue nodes.	Then the torso of~$S$ has no $K_m$-minor.
    \end{lemma}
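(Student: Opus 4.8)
I would argue by contradiction: suppose the torso at $S$ contains a model $\X$ of $K_m$. The plan is to expand $\X$, through the red parts of $T$ hanging off $S$, into a model $\X'$ of $K_m$ in $G$ whose home node $t_{\X'}$ still lies in $S$. Since every node of $S$ is blue while $t_{\X'}$ is by construction a model-node, the component of $T-F$ containing $t_{\X'}$ would then contain both a block-node and a model-node, contradicting (the proof of) Lemma~\ref{colouring total}.

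First I would set up the red sides. Since $S$ is a \emph{maximal} blue subtree, every edge of $T$ with exactly one end in $S$ has its other end red. Fix such an edge $e=st$ with $s\in S$ and $t$ red, write $Q_e:=V_s\cap V_t$, and note $|Q_e|<k$ as $(T,\V)$ has adhesion $<k$. Writing $V_S:=\bigcup_{u\in S}V_u$, two consequences of the subtree property will be used repeatedly: $V_S\cap W_t\sub Q_e$, and, for a second such edge $e'$ with far side $W_{t'}$, $W_t\cap W_{t'}\sub Q_e\cap Q_{e'}$. By Lemma~\ref{join separators to minors} (applicable because $s$ is blue and $t$ is red), $G[W_t]$ has a $Q_e$-based model $(M^e_q)_{q\in Q_e}$ with $q\in M^e_q\sub W_t$.

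Next I would expand the model. For $X\in\X$ and an edge $e$ as above with $X\cap Q_e\neq\emptyset$, put $R^e_X:=\bigcup_{q\in X\cap Q_e}M^e_q$ — a connected subgraph of $G[W_t]$ with $R^e_X\cap V_S=X\cap Q_e$ — and set $X':=X\cup\bigcup_e R^e_X$, the union over all relevant edges. Then $\X':=\{X':X\in\X\}$ should be a model of $K_m$ in $G$: each fake clique-edge of the torso on $Q_e$ joining $x,y\in X$ is realised by an $x$--$y$ path inside $M^e_x\cup M^e_y\sub R^e_X$, so $X'$ is connected; a fake edge of the torso on $Q_e$ from $x\in X$ to $y\in Y$ yields an edge of $G$ between $M^e_x\sub X'$ and $M^e_y\sub Y'$ (note $x\neq y$, so $M^e_x\neq M^e_y$); and the $X'$ are pairwise disjoint because distinct branch sets of $\X$ meet each $Q_e$ in disjoint sets, hence use disjoint pieces of $(M^e_q)_q$, while across distinct edges $R^e_X$ and $R^{e'}_Y$ can overlap only inside $W_t\cap W_{t'}\sub Q_e\cap Q_{e'}\sub V_S$, where they reduce to the disjoint sets $X\cap Q_e$ and $Y\cap Q_{e'}$ ($X\neq Y$).

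Finally I would check that $t_{\X'}\in S$, i.e.\ that for every edge $e=st$ with $s\in S$ and $t$ red the orientation $O_{\X'}$ sends $e$ towards $s$ — equivalently, the side of the induced separation $(U_s,W_t)$ that contains a branch set of $\X'$ entirely is the $s$-side. No branch set $X'$ lies entirely in $W_t\setminus U_s$: it contains the nonempty set $X\sub V_S$, and $V_S$ is disjoint from $W_t\setminus U_s$ since $V_S\cap W_t\sub Q_e\sub V_s\sub U_s$. And since at most $|Q_e|<k\le m$ of the $m$ branch sets of $\X$ meet $Q_e$, some $X\in\X$ avoids $Q_e$; for that $X$ one gets $X'=X\cup\bigcup_{e'}R^{e'}_X\sub U_s\setminus W_t$, using that $X\cap W_t\sub V_S\cap W_t\sub Q_e$ is empty, that $R^{e'}_X\sub W_{t'}\sub U_s$ for the other edges $e'$, and that $R^{e'}_X$ meets $W_t$ only inside $V_S\cap W_t\sub Q_e$ while $R^{e'}_X\cap Q_e\sub R^{e'}_X\cap V_S=X\cap Q_{e'}\sub X$ is disjoint from $Q_e$. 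Hence every edge leaving $S$ points inward, so $t_{\X'}\in S$, and the contradiction described above is reached. I expect the expansion step to carry the weight of the argument: arranging the pull-back so that the new branch sets are simultaneously pairwise disjoint and confined to the ``near'' side of every edge leaving $S$ — both of which rest on the structural facts $V_S\cap W_t\sub Q_e$ and $W_t\cap W_{t'}\sub V_S$ together with the crude bound $|Q_e|<m$.
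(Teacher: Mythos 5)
Your proof is correct and follows essentially the same approach as the paper's: both invoke Lemma~\ref{join separators to minors} to obtain a $(V_s\cap V_t)$-based model in $G[W_t]$ for each edge $st$ leaving $S$, use these to lift a hypothetical $K_m$-minor in the torso of $S$ to a $K_m$-minor $\X'$ of $G$, observe that $\X'$ orients every edge leaving $S$ inward so that $t_{\X'}\in S$, and then contradict Lemma~\ref{colouring total}. The paper phrases the lifting step more compactly, by first exhibiting the whole torso of $S$ as a minor of $G$ via contracting the $Q_e$-based branch sets onto their separator vertices, whereas you expand the branch sets of $\X$ directly; this is a presentational difference only.
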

    
       \begin{proof}
    Let $F_S := \{ (s,t) \colon st \in E(T), s \in S, t \notin S\}$. For every $(s,t) \in F_S$, the node~$s$ is blue and~$t$ is red. By Lemma~\ref{join separators to minors}, $G_t$ has a $(V_s \cap V_t)$-based complete minor~$\Y^{s,t}$. Contract each of its branch sets onto the single vertex of $V_s \cap V_t$ that it contains. Do this for every $(s,t) \in F_S$. After deleting any vertices outside of $V_S := \bigcup_{s \in S} V_s$, we obtain the torso of~$S$ as a minor of the graph~$G$.
    
    Suppose the torso of~$S$ contained a $K_m$-minor. Then~$G$ has a $K_m$-minor~$\X$ such that every $X \in \X$ meets~$V_S$. Therefore~$\X$ orients every edge $st \in E(T)$ with $(s,t) \in F_S$ towards~$s$. But then $t_{\X} \in S$, contradicting the assumption that~$S$ contains no red nodes.    
    \end{proof}	

 We can now finish the proof. Let $(T',\V')$ be obtained from $(T,\V)$ by contracting every maximal subtree consisting of blue nodes and let the vertices of~$T'$ inherit the colouring from $V(T)$. We claim that $(T',\V')$ satisfies the conditions of Theorem \ref{main result}.
 
Indeed, firstly, the adhesion of $(T',\V')$ is at most that of $(T,\V)$, and hence is at most $k$. Secondly, the torso of every red node in $(T',\V')$ is the torso of some red node in $(T, \V)$, which by Lemma~\ref{structure thm blocks} has fewer than~$k$ vertices of degree at least~$2k^2$. Finally, by Lemma~\ref{torso minor-free} the torso of every blue node in $(T',\V')$ has no $K_m$ minor. Since $k=r(r-1)$ and $m=2k$, the theorem follows.\\

As claimed in the introduction, it is not hard to turn this proof into an algorithm to find either a subdivision of~$K_r$ or an appropriate tree-decomposition. Indeed, the proof of Lemma~\ref{lean lemma} can easily be adapted to give an algorithm to find a tight $k$-lean tree-decomposition. Similarly, in order to colour the vertices of the tree red or blue we must check for the existence of a~$K_m$ minor or a $k$-block having this vertex as a home node, both of which can be done algorithmically (see~\cite{GMXIII} and~\cite{ForcingBlocks}). However, we note that the running time of such an algorithm, or at least a naive implementation of one, would have run time $\sim |V(G)|^{f(r)}$ for some function of the size of the topological minor~$K_r$ we are excluding, whereas the algorithm of Grohe and Marx has run time $g(r)|V(G)|^{O(1)}$, which should be much better for large values of~$r$.

	\end{section}

\bibliographystyle{plain}
\bibliography{topminbib} 

\end{document}